\theoremstyle{plain}
\newtheorem{theorem}{Theorem}[section]
\newtheorem*{theorem*}{Theorem}
\newtheorem{lemma}[theorem]{Lemma}
\theoremstyle{definition}
\newtheorem*{definition*}{Definition}
\numberwithin{equation}{section}
\title[]{Spectral bounds for singular indefinite Sturm-Liouville operators with $L^1$--potentials}
\author{Jussi Behrndt}
\address{Institut f\"ur Numerische Mathematik,
    Technische Universit\"at Graz,
    Steyrergasse 30,
    A-8010 Graz,
    Austria}
\email{behrndt@tugraz.at}
\author{Philipp Schmitz}
\address{Institut f\"ur  Mathematik,  Technische Universit\"{a}t Ilmenau, Postfach 100565, D-98684 Ilmenau,  Germany}
\email{philipp.schmitz@tu-ilmenau.de}
\author{Carsten Trunk}
\address{Institut f\"ur  Mathematik,  Technische Universit\"{a}t Ilmenau, Postfach 100565, D-98684 Ilmenau,  Germany}
\email{carsten.trunk@tu-ilmenau.de}
\keywords{Non-real eigenvalue, indefinite Sturm-Liouville operator, Krein space, Birman-Schwinger principle}
\begin{document}
\begin{abstract} 
The spectrum of the
singular indefinite Sturm-Liouville operator $$A=\operatorname{sgn}(\cdot)\bigl(-\tfrac{d^2}{dx^2}+q\bigr)$$
with a real potential
$q\in L^1(\mathbb R)$ covers the whole real line and, in addition, non-real eigenvalues may appear if the potential $q$ assumes negative values. 
A quantitative analysis of the non-real eigenvalues is a 
challenging problem, and so far only partial results in this direction were obtained. 
In this paper the bound $$\vert\lambda\vert\leq \Vert q\Vert_{L^1}^2$$ on the absolute values of the non-real eigenvalues $\lambda$ of $A$ is obtained. Furthermore,
separate bounds on the imaginary parts and absolute values of these eigenvalues are proved in terms of the $L^1$-norm of the negative part of $q$.
%
%
%
%
\end{abstract}
\maketitle


\section{Introduction}

The aim of this paper is to prove bounds on the absolute values of the non-real eigenvalues of the singular indefinite Sturm-Liouville operator
\begin{equation*}
\begin{split}
 Af &= \operatorname{sgn}(\cdot)\bigl(-f''+qf\bigr),\\ \operatorname{dom} A&=\bigl\{f\in L^2(\mathbb R): f,f'\in AC(\mathbb R),-f''+qf\in L^2(\mathbb R)\bigr\},
\end{split}
\end{equation*}
where $AC(\mathbb R)$ stands for space of all locally absolutely continuous
functions. It will always be assumed that the potential $q$ is real-valued and belongs to
$L^1(\mathbb R)$.

The operator $A$ is not symmetric nor self-adjoint in an $L^2$-Hilbert space
due to the sign change of the weight function $\operatorname{sgn}(\cdot)$.
However, $A$ can be interpreted as a self-adjoint operator with respect to the Krein space inner product
$(\operatorname{sgn}\cdot,\cdot)$ in $L^2(\mathbb R)$.
We summarize the qualitative spectral properties of $A$ in the next theorem, which follows from \cite[Theorem~4.2]{BehrndtPhilipp10} or
 \cite[Proposition 2.4]{KT09} and
the well-known spectral properties of the definite Sturm-Liouville operator $-\frac{d^2}{dx^2}+q$;
cf. \cite{Teschl14,Weidmann87,Weidmann03}.

\begin{theorem}\label{t1}
The essential spectrum of $A$ coincides with $\mathbb R$ and the non-real spectrum of $A$ consists of isolated eigenvalues with finite algebraic multiplicity
which are symmetric with respect to $\mathbb R$.
\end{theorem}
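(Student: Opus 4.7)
The strategy is to compare $A$ with a ``decoupled'' auxiliary operator $A^{D}$ that is self-adjoint in the ordinary Hilbert-space sense and for which the spectrum can be read off directly from half-line Sturm--Liouville theory.

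Write $J$ for multiplication by $\operatorname{sgn}(\cdot)$ on $L^{2}(\mathbb R)$ and $T=-\tfrac{d^{2}}{dx^{2}}+q$ for the maximal real Schrödinger operator; since $q\in L^{1}(\mathbb R)$ puts the equation into the limit-point case at $\pm\infty$, $T$ is self-adjoint, and one has $A=JT$ with $J=J^{*}$, $J^{2}=I$. The identity $JA=T=T^{*}=A^{*}J$ then shows that $A-\lambda$ is invertible if and only if $A^{*}-\lambda$ is, which together with $\sigma(A^{*})=\overline{\sigma(A)}$ yields $\sigma(A)=\overline{\sigma(A)}$ and gives the claimed symmetry about $\mathbb R$.

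Next I would introduce the Dirichlet-decoupled operator $A^{D}$ obtained by additionally imposing $f(0\pm)=0$; then $A^{D}=T_{+}\oplus(-T_{-})$ on $L^{2}(\mathbb R_{+})\oplus L^{2}(\mathbb R_{-})$, where $T_{\pm}$ denotes the Dirichlet realisation of $-\tfrac{d^{2}}{dx^{2}}+q$ on the half-line, both self-adjoint in the usual sense. Since $q\in L^{1}(\mathbb R_{\pm})$, Weyl's theorem applied to the relatively compact perturbation $q$ of the free half-line Laplacian gives $\sigma_{\mathrm{ess}}(T_{\pm})=[0,\infty)$, hence $\sigma(A^{D})\subset\mathbb R$ and $\sigma_{\mathrm{ess}}(A^{D})=\mathbb R$. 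The operators $A$ and $A^{D}$ are two-dimensional restrictions of a common maximal operator on the decoupled domain, differing only in the interface conditions at $0$ (continuity of $f,f'$ versus $f(0\pm)=0$), so by the Krein resolvent formula $(A-z)^{-1}-(A^{D}-z)^{-1}$ has rank at most $2$ for every $z\in\rho(A)\cap\rho(A^{D})$. Invariance of the essential spectrum under finite-rank resolvent perturbations then yields $\sigma_{\mathrm{ess}}(A)=\mathbb R$, and analytic Fredholm theory applied to the factor $I+K(z)$ appearing in the Krein formula on the connected sets $\mathbb C^{\pm}$ shows that $\sigma(A)\cap(\mathbb C\setminus\mathbb R)$ consists of isolated eigenvalues of finite algebraic multiplicity.

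The main obstacle is to ensure $\rho(A)\cap(\mathbb C\setminus\mathbb R)\neq\emptyset$, as nothing a priori rules out that the rank-two modification turns all of $\mathbb C\setminus\mathbb R$ into spectrum; I would handle this by estimating $K(z)$ in terms of Jost-type fundamental solutions of $-f''+qf=zf$ on the half-lines, whose off-diagonal interface values decay when $|\operatorname{Im} z|\to\infty$, so that $I+K(z)$ is invertible for large imaginary parts and the analytic Fredholm argument starts.
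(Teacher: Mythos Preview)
The paper does not actually prove Theorem~\ref{t1}; it merely cites \cite[Theorem~4.2]{BehrndtPhilipp10} and \cite[Proposition~2.4]{KT09} together with standard facts about the definite operator $-\tfrac{d^2}{dx^2}+q$. Those references obtain the result through abstract Krein-space machinery: one shows that $A$ is self-adjoint in the Krein space $(L^2(\mathbb R),(\operatorname{sgn}\cdot,\cdot))$ and, using that $JA=T$ is Hilbert-space self-adjoint and semibounded, that $A$ is \emph{definitizable}. Definitizable operators automatically have nonempty resolvent set, real essential spectrum determined by the underlying self-adjoint data, and at most finitely many non-real eigenvalues of finite algebraic multiplicity, symmetric about $\mathbb R$.

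Your plan is a genuinely different, more hands-on route: Glazman decoupling at $0$ produces the Hilbert-space self-adjoint comparison operator $A^{D}=T_{+}\oplus(-T_{-})$ with $\sigma_{\mathrm{ess}}(A^{D})=\mathbb R$, and then the rank-$\leq 2$ resolvent difference together with analytic Fredholm theory transfers this to $A$. The symmetry argument via $JA=A^{*}J$ is clean and correct. The approach has the advantage of being entirely elementary---no definitizability theory is invoked---but it forces you to confront directly the point that the Krein-space theory handles for free, namely $\rho(A)\cap(\mathbb C\setminus\mathbb R)\neq\emptyset$. You identify this correctly as the main obstacle; your proposed fix (showing the $2\times 2$ interface matrix $K(z)$ tends to $0$ as $|\operatorname{Im} z|\to\infty$ via Jost-solution asymptotics) is the right idea and does work for $q\in L^{1}$, though the details are not entirely trivial. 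In short: your argument is sound and more concrete than what the paper invokes, at the cost of having to supply the non-emptiness of the resolvent set by hand.
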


Indefinite Sturm-Liouville operators have been studied for more than a century, and have again attracted a lot of attention in the recent past.
Early works in this context usually deal with the regular case, that is, the operator $A$ is studied on a finite interval with appropriate
boundary conditions at the endpoints; cf. \cite{H14,R18} and, e.g., \cite{CL89,M86,Z05}. In this situation the spectrum of $A$ is purely discrete and
various estimates on the real  and imaginary parts  of the non-real
eigenvalues were obtained in the last few years; cf.\ \cite{BehrndtChenPhlippQi,ChenQi14,ChenQiXie16,GuoSunXie17,KikonkoMingarelli16,QiXie13}.
The singular case is much less studied, due to the technical difficulties which, very roughly speaking, are caused
by the presence of continuous spectrum.

Explicit bounds on non-real eigenvalues for singular Sturm-Liouville
operators with
$L^\infty$-potentials were obtained with Krein space perturbation techniques in \cite{BehrndtPhilippTrunk13} and under additional assumptions
for $L^1$-potentials in \cite{BehrndtSchmitzTrunkPAMM16,BehrndtSchmitzTrunkPAMM17}, see also \cite{BehrndtKatatbehTrunk09} for the absence of real eigenvalues
and \cite{LS16} for the accumulation of non-real eigenvalues of a very particular family of potentials.
In this paper we substantially improve the earlier bounds
in \cite{BehrndtSchmitzTrunkPAMM16,BehrndtSchmitzTrunkPAMM17}
and relax the conditions on the potential. More precisely, here we prove for arbitrary real $q\in L^1(\mathbb R)$ the following bound.

\begin{theorem}\label{ttt1}
Let $q\in L^1(\mathbb R)$ be real. Every non-real eigenvalue $\lambda$ of the
indefinite Sturm-Liouville operator $A$ satisfies
\begin{equation}\label{davisbound1}
|\lambda|\leq \|q\|_{L^1}^2.
\end{equation}
\end{theorem}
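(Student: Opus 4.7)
The strategy is a Birman--Schwinger argument with the ``free'' operator chosen to incorporate the indefinite weight, rather than the pure Laplacian. Indeed, if one used $-d^2/dx^2$ as the free part, the effective potential in the Birman--Schwinger kernel would contain $\lambda\operatorname{sgn}(\cdot)$, which is not in $L^1(\mathbb R)$ and cannot be controlled by $\|q\|_{L^1}$. So I would work with
\[
L_0(\lambda):=-\frac{d^2}{dx^2}-\lambda\,\operatorname{sgn}(\cdot),\qquad \operatorname{dom} L_0(\lambda)=H^2(\mathbb R),
\]
and rewrite $Af=\lambda f$ (after multiplying by $\operatorname{sgn}(\cdot)$) as $L_0(\lambda)f=-qf$. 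For non-real $\lambda$, the factorization $L_0(\lambda)=\operatorname{sgn}(\cdot)(A_0-\lambda)$ with $A_0:=\operatorname{sgn}(\cdot)(-d^2/dx^2)$ together with $\sigma(A_0)=\mathbb R$ (a brief energy argument rules out non-real eigenvalues in the $q=0$ specialization of Theorem~\ref{t1}) shows that $L_0(\lambda)$ is boundedly invertible on $L^2(\mathbb R)$.

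Setting $u:=|q|^{1/2}f$ and writing $q=|q|^{1/2}\operatorname{sgn}(q)|q|^{1/2}$ turns the eigenvalue equation into the Birman--Schwinger identity
\[
u+Ku=0,\qquad K:=|q|^{1/2}\,L_0(\lambda)^{-1}\,|q|^{1/2}\operatorname{sgn}(q)\in\mathcal B(L^2(\mathbb R)),
\]
and $u\ne 0$ (the alternative $qf\equiv 0$ would force $L_0(\lambda)f=0$, hence $f=0$), so $\|K\|\ge 1$. The central step is then a uniform pointwise bound on the integral kernel $G_\lambda(x,y)$ of $L_0(\lambda)^{-1}$. Because $\lambda\operatorname{sgn}(\cdot)$ is constant on each half-line, the Weyl/Jost solutions of $L_0(\lambda)v=0$ are explicit piecewise exponentials: assuming WLOG $\operatorname{Im}\lambda>0$ and writing $k=\sqrt\lambda$ in the open first quadrant, one has $\psi_+(x)=e^{ikx}$ on $\mathbb R_+$, extended by $C^1$-matching to $\tfrac{1+i}{2}e^{kx}+\tfrac{1-i}{2}e^{-kx}$ on $\mathbb R_-$, and analogously $\psi_-$ starting from $e^{kx}$ on $\mathbb R_-$; their Wronskian is $W=k(i-1)$, with $|W|=|k|\sqrt 2$. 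Inserting $G_\lambda(x,y)=-\psi_-(x_<)\psi_+(x_>)/W$ and performing a case analysis over the four sign patterns of $(x,y)$ yields the key bound
\[
|G_\lambda(x,y)| \le \frac{1}{|k|}=\frac{1}{\sqrt{|\lambda|}},\qquad x,y\in\mathbb R.
\]
Substituting this into the kernel of $K$ and applying Cauchy--Schwarz in the $y$-integral gives $\|K\|_{L^2\to L^2}\le\|q\|_{L^1}/|k|$, so $|k|\le\|q\|_{L^1}$ and $|\lambda|=|k|^2\le\|q\|_{L^1}^2$.

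The main obstacle is the uniform kernel bound $|G_\lambda|\le 1/|k|$ in the two ``same-side'' regimes $x,y>0$ and $x,y<0$, where $\psi_-$ on $\mathbb R_+$ (respectively $\psi_+$ on $\mathbb R_-$) contains a \emph{growing} exponential. Unlike for the pure Laplacian, whose kernel $e^{ik|x-y|}/(2ik)$ is manifestly bounded by $1/(2|k|)$, here the apparent blow-up from the growing exponential must be exactly compensated by the factor $\sqrt 2$ in the Wronskian. The key algebraic identity enabling this cancellation is
\[
|(1-i)\alpha+(1+i)\beta|^2 = 2(|\alpha|^2+|\beta|^2) + 4\operatorname{Im}(\alpha\bar\beta),
\]
applied with $\alpha=e^{ik(x+y)}$ and $\beta=e^{ik|x-y|}$; combined with $\operatorname{Im}(\alpha\bar\beta)\le|\alpha||\beta|$ it delivers the desired pointwise estimate.
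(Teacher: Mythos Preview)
Your approach is essentially the paper's: both use Birman--Schwinger with the free indefinite operator $\operatorname{sgn}(\cdot)(-d^2/dx^2)$, build its Green's function from the piecewise-exponential Jost solutions (the paper's $u,v$ are your $\psi_\pm$), and prove the pointwise bound $|G_\lambda|\le|\lambda|^{-1/2}$. The paper, however, runs the argument in $L^1(\mathbb R)$ rather than $L^2$: it uses the exponential asymptotics of the eigenfunction to get $f,qf,f''\in L^1$, hence $f\in\operatorname{dom}B_0$, and concludes from $\|qf\|_{L^1}\le|\lambda|^{-1/2}\|q\|_{L^1}\|qf\|_{L^1}$. This choice is not accidental: in your $L^2$ version, $qf$ need not lie in $L^2$ when $q$ is merely $L^1$, so $f\notin H^2=\operatorname{dom}L_0(\lambda)$ in general, and the identity $u+Ku=0$ must be justified directly through the integral kernel and the decay of $f$ rather than by composing bounded $L^2$ operators; the paper's $L^1$ setting avoids this wrinkle. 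One further simplification: in the same-side regime $0<y<x$ you have $\psi_-(y)\psi_+(x)=\tfrac{1-i}{2}e^{ik(x+y)}+\tfrac{1+i}{2}e^{ik(x-y)}$, and both exponentials already have modulus $\le 1$, so the triangle inequality alone yields $|\psi_-(y)\psi_+(x)|\le\sqrt2=|W|/|k|$; your algebraic identity is correct but unnecessary.
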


Moreover, we prove two bounds in terms of  the negative part $q_-$ of $q$.
\begin{theorem}\label{ttt2}
Let $q\in L^1(\mathbb R)$ be real. Every non-real eigenvalue $\lambda$ of the
indefinite Sturm-Liouville operator $A$ satisfies
	\begin{align}
	\label{imagpart1}
	|\operatorname{Im}\lambda|\leq 24\cdot\sqrt{3}\|q_-\|_{L^1}^2\quad\text{and}\quad |\lambda| \leq (24\cdot\sqrt{3} +18)\|q_-\|_{L^1}^2.
	\end{align}
\end{theorem}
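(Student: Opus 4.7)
The plan is to extract the bounds directly from the eigenvalue equation $-f''+qf=\lambda\operatorname{sgn}(x)f$ for $f\in\operatorname{dom}A\setminus\{0\}$ with $\operatorname{Im}\lambda\neq 0$, by testing it against $\overline{f}$ and $\operatorname{sgn}(x)\overline{f}$ and exploiting the decomposition $q=q_+-q_-$. Normalise $\|f\|_{L^2(\mathbb{R})}=1$. Multiplying the equation by $\overline{f}$, integrating over $\mathbb{R}$, and comparing real and imaginary parts forces
\[
\int_{\mathbb{R}_+}|f|^2\,dx=\int_{\mathbb{R}_-}|f|^2\,dx=\tfrac12\qquad\text{and}\qquad \|f'\|_{L^2}^2+\int q_+|f|^2\,dx=\int q_-|f|^2\,dx.
\]
Combined with the sharp one-dimensional Sobolev embedding $\|f\|_\infty^2\leq\|f\|_{L^2}\|f'\|_{L^2}$, iterating this energy identity produces $\|f\|_\infty^2\leq\|q_-\|_{L^1}$, $\|f'\|_{L^2}^2\leq\|q_-\|_{L^1}^2$, and $\int|q||f|^2\,dx\leq 2\|q_-\|_{L^1}^2$ (the last step uses the energy identity to dominate the $q_+$ contribution by the $q_-$ one).

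Multiplying now by $\operatorname{sgn}(x)\overline{f}$ and integrating by parts on $(0,\infty)$ and $(-\infty,0)$ separately, the jump of $\operatorname{sgn}$ at $0$ contributes a boundary term $2f'(0)\overline{f(0)}$ (well-defined since $f,f'\in AC(\mathbb{R})$), giving
\[
\lambda=2f'(0)\overline{f(0)}+\Bigl(\int_0^\infty|f'|^2\,dx-\int_{-\infty}^0|f'|^2\,dx\Bigr)+\int q\operatorname{sgn}(x)|f|^2\,dx.
\]
The bracketed integrals are real, so taking imaginary parts yields $\operatorname{Im}\lambda=2\operatorname{Im}(f'(0)\overline{f(0)})$ and hence $|\operatorname{Im}\lambda|\leq 2|f'(0)||f(0)|$, while taking moduli and inserting the bounds from the previous paragraph gives $|\lambda|\leq 2|f'(0)||f(0)|+3\|q_-\|_{L^1}^2$.

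The main remaining task, and the hard part of the proof, is to bound $|f'(0)||f(0)|$ by a universal multiple of $\|q_-\|_{L^1}^2$. The factor $|f(0)|\leq\|f\|_\infty\leq\|q_-\|_{L^1}^{1/2}$ is already under control. For $|f'(0)|$, a naive Sobolev bound $\|f'\|_\infty^2\leq\|f'\|_{L^2}\|f''\|_{L^2}$ is not available, because $\|f''\|_{L^2}\leq\|qf\|_{L^2}+|\lambda|\|f\|_{L^2}$ effectively demands $q\in L^2$. I would instead represent $f|_{\mathbb{R}_\pm}$ as a scalar multiple of the Jost solution of $-\psi''+q\psi=\pm\lambda\psi$ (which exists and is uniquely determined by $L^2$-decay at $\pm\infty$) and express $\psi(0),\psi'(0)$ via the associated Volterra integral equation. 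The crucial step is to replace the integrals of $|q||f|$ appearing in the Volterra estimate by integrals of $q_-|f|^2\,dx$ through Cauchy-Schwarz together with the mass equality $\|f\|_{L^2(\mathbb{R}_\pm)}^2=\tfrac12$ from the first paragraph, which converts $\|q\|_{L^1}$ factors into $\|q_-\|_{L^1}$ factors. Optimising in an auxiliary parameter (plausibly a cutoff separating near-$0$ from asymptotic Jost behaviour) should then produce the bound $|f'(0)||f(0)|\leq 12\sqrt{3}\,\|q_-\|_{L^1}^2$, with the $\sqrt{3}$ arising from the optimisation.

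Inserting this estimate into the identity of the previous paragraph yields simultaneously $|\operatorname{Im}\lambda|\leq 24\sqrt{3}\|q_-\|_{L^1}^2$ and $|\lambda|\leq(24\sqrt{3}+c)\|q_-\|_{L^1}^2$ for some explicit $c$; a careful accounting of Sobolev constants and of the estimate $\int|q||f|^2\leq 2\int q_-|f|^2$ should yield the stated value $c=18$. The central obstacle throughout is that the desired bound must involve only $\|q_-\|_{L^1}$, not $\|q\|_{L^1}$: this forces an essential use of the energy identity to trade $q_+$ contributions for $q_-$ ones at every step where one is tempted to bound $|q|$ against $|f|$ (rather than against $|f|^2$).
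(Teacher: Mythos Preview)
Your preliminary steps are correct and match the paper's Lemma~\ref{chinaLemma}: the identities $\int_{\mathbb R_+}|f|^2=\int_{\mathbb R_-}|f|^2=\tfrac12$ and $\|f'\|_{L^2}^2+\int q|f|^2=0$, and the resulting bounds on $\|f\|_\infty$, $\|f'\|_{L^2}$ and $\int|q||f|^2$ (your Sobolev constant is even sharper than the paper's). Testing against $\operatorname{sgn}(x)\overline f$ and isolating the boundary term $2f'(0)\overline{f(0)}$ is also a natural move. The gap is in what you correctly flag as the hard part: bounding $|f'(0)|$ by a multiple of $\|q_-\|_{L^1}^{3/2}$. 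The Jost/Volterra argument you sketch does not deliver this. Differentiating the Volterra equation $\psi(x)=e^{i\sqrt\lambda x}+\int_x^\infty\tfrac{\sin\sqrt\lambda(t-x)}{\sqrt\lambda}q(t)\psi(t)\,dt$ at $x=0$ produces a term of order $|\lambda|^{1/2}$ (the very quantity you are trying to bound) plus an integral of the form $\int_0^\infty|q||f|$. Your proposed Cauchy--Schwarz conversion gives $\int|q||f|\leq\|q\|_{L^1}^{1/2}\bigl(\int|q||f|^2\bigr)^{1/2}$, so a factor $\|q\|_{L^1}^{1/2}$ survives; the energy identity lets you trade $\int q_+|f|^2$ for $\int q_-|f|^2$, but not $\int q_+$ for $\int q_-$. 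I do not see how to close this in terms of $\|q_-\|_{L^1}$ alone.

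The paper sidesteps the pointwise control of $f'(0)$ entirely. Rather than testing against $\operatorname{sgn}(x)\overline f$, it integrates the identity $\lambda U(x)=f'(x)\overline{f(x)}+V(x)$ (your two tested identities repackaged) against $g'$, where $g$ is a Lipschitz mollification of $\operatorname{sgn}$, linear on $[-\delta,\delta]$. The boundary term $2f'(0)\overline{f(0)}$ is thereby replaced by $\int g'f'\overline f$, which is bounded by $\|f\|_\infty\|f'\|_{L^2}\|g'\|_{L^2}$ --- three factors each already controlled by $\|q_-\|_{L^1}$ and $\delta$. The price is that $\int g'U$ is no longer $\|f\|_{L^2}^2$ but only $\geq\|f\|_{L^2}^2-2\delta\|f\|_\infty^2$; the choice $\delta=(24\|q_-\|_{L^1})^{-1}$ makes this $\geq\tfrac23\|f\|_{L^2}^2$ while $\|g'\|_{L^2}=\sqrt{2/\delta}=4\sqrt3\,\|q_-\|_{L^1}^{1/2}$, and this is where the $\sqrt3$ in the stated constants actually comes from. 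So the auxiliary parameter you anticipated is the mollification width, not a Jost-solution cutoff.
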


The bound \eqref{davisbound1} is proved in Section~\ref{BirmanSchwinger}.
Its proof is based on the Birman-Schwinger principle using similar arguments as in
\cite{AbramovAslanyanDavies,DN02}, \cite[Chapter 14.3]{Davies07}; see also \cite{BE02}. The bounds in \eqref{imagpart1} are obtained in Section~\ref{sec3}
by adapting the techniques from the regular case in \cite{BehrndtChenPhlippQi,ChenQi14,QiXie13} to the present singular situation.

\section{Proof of Theorem \ref{ttt1}}\label{BirmanSchwinger}
In this section we prove the bound \eqref{davisbound1} for the non-real eigenvalues of $A$. We adapt a technique similar to the Birman-Schwinger principle in \cite{Davies07} and apply it to the indefinite operator $A$.
The main ingredient is a bound for the integral kernel of the resolvent of the operator
\begin{align*}
	B_0f=\operatorname{sgn}(\cdot)\bigl(-f''\bigr),
	\quad \operatorname{dom} B_0=\bigl\{f\in L^1(\mathbb R): f,f'\in AC(\mathbb R), -f''\in L^1(\mathbb R)\bigr\},
\end{align*}
in $L^1(\mathbb R)$.

\begin{lemma}\label{blem}
	The operator $B_0$
	 is closed in $L^1(\mathbb R)$ and for all $\lambda$ in the open upper half-plane $\mathbb C^+$ the resolvent of $B_0$ is an integral operator
	\begin{align*}
		\big[(B_0-\lambda)^{-1}g\big](x) = \int_{\mathbb R} K_\lambda(x,y)
        g(y)\,\mathrm d y, \quad g \in L^1(\mathbb R),
	\end{align*}
	where the kernel $K_\lambda :\mathbb R \times \mathbb R \to \mathbb C$
 is bounded by $|K_\lambda(x,y)|\leq|\lambda|^{-\frac{1}{2}}$ for all
 $x,y \in \mathbb R$.
\end{lemma}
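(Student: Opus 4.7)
The plan is to construct the resolvent of $B_0$ explicitly on each half-line and then read off the kernel bound. Writing the equation $(B_0-\lambda)f=g$ as
\[
-f'' - \lambda \operatorname{sgn}(x)\, f = \operatorname{sgn}(x)\, g,
\]
the homogeneous equation has constant coefficients on each of $\mathbb{R}_\pm$. Setting $k := \sqrt{\lambda}$ in the open first quadrant (so $\operatorname{Re} k, \operatorname{Im} k > 0$), the homogeneous solutions decaying at $+\infty$ and $-\infty$ are $e^{ikx}$ and $e^{kx}$, respectively.

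First I would build two Jost-type solutions. Define $\phi_+(x) = e^{ikx}$ for $x>0$ and extend to $x<0$ by matching $\phi_+$ and $\phi_+'$ continuously at $0$, which yields $\phi_+(x) = \tfrac{1+i}{2} e^{kx} + \tfrac{1-i}{2} e^{-kx}$ there; define $\phi_-$ symmetrically, equal to $e^{kx}$ for $x<0$ and $\tfrac{1-i}{2} e^{ikx} + \tfrac{1+i}{2} e^{-ikx}$ for $x>0$. Since the equation has no first-derivative term, the Wronskian $W := W(\phi_-, \phi_+)$ is constant, and a direct evaluation at $0$ gives $W = -k(1-i)$, hence $|W| = \sqrt{2|\lambda|}$. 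The standard Green's function construction then yields
\[
K_\lambda(x,y) = \frac{\operatorname{sgn}(y)}{W}\,\phi_-(\min(x,y))\,\phi_+(\max(x,y)).
\]

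The bound $|K_\lambda(x,y)| \le |\lambda|^{-1/2}$ reduces to $|\phi_-(\min(x,y))\,\phi_+(\max(x,y))| \le \sqrt{2}$. I would verify this by a short case analysis according to the signs of $\min(x,y)$ and $\max(x,y)$; in each of the three cases the product decomposes into a sum of at most two exponentials of the form $e^{\alpha \operatorname{Re} k}$ or $e^{\beta \operatorname{Im} k}$ with $\alpha,\beta \le 0$ (each bounded by $1$), and the prefactor $\tfrac{\sqrt{2}}{2}$ delivers the constant $\sqrt{2}$.

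Finally, to identify this as the resolvent and deduce closedness, I would show that $Rg(x) := \int_{\mathbb R} K_\lambda(x,y)\, g(y)\,dy$ is bounded on $L^1(\mathbb R)$ via a Schur test exploiting the exponential decay of $K_\lambda$ in $|x-y|$; that $Rg \in \operatorname{dom} B_0$ with $(B_0-\lambda)Rg = g$ (following from the jump relations of the Green's function and the $C^1$-matching of $\phi_\pm$ at $0$); and that $B_0 - \lambda$ is injective, since any homogeneous $L^1$-solution would have to coincide with multiples of $\phi_+$ on $\mathbb R_+$ and of $\phi_-$ on $\mathbb R_-$, and $W \neq 0$ prevents a nontrivial $C^1$-match at $0$. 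The main technical obstacle is the discontinuity of $\operatorname{sgn}(x)$ at $0$, which blocks direct appeal to standard Sturm-Liouville resolvent theory; once $\phi_\pm$ are engineered to cross $0$ in $C^1$-fashion, however, the domain condition $f,f'\in AC(\mathbb R)$ is automatic and the remaining verifications are routine.
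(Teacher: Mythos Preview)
Your approach is essentially the paper's: your Jost solutions $\phi_\pm$ coincide with the paper's $u,v$, and the paper's explicit kernel decomposition $K_\lambda=C_\lambda+D_\lambda$ is nothing but your product $\tfrac{\operatorname{sgn}(y)}{W}\phi_-(\min)\phi_+(\max)$ written out in the four sign quadrants, after which the Schur-type $L^1$-bound, the verification $(B_0-\lambda)T_\lambda=I$, and the injectivity argument run in parallel. One small slip worth flagging: with your convention $W:=W(\phi_-,\phi_+)=-k(1-i)$ the kernel should carry $-\operatorname{sgn}(y)/W$ rather than $+\operatorname{sgn}(y)/W$ (equivalently, divide by $W(\phi_+,\phi_-)=2\alpha\sqrt{\lambda}$ as the paper does); this sign is immaterial for the bound $|K_\lambda|\le|\lambda|^{-1/2}$ but would surface when you check $(B_0-\lambda)Rg=g$.
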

\begin{proof}
Here and in the following we define $\sqrt{\lambda}$ for $\lambda\in \mathbb C^+$
	as the principal value of the square root, which ensures $\operatorname{Im}\sqrt{\lambda}>0$ and $\operatorname{Re}\sqrt{\lambda}>0$.
	For $\lambda \in \mathbb C^+$ consider the integral operator
	\begin{align}
		\label{defT}
		(T_\lambda g)(x) = \int_{\mathbb R} K_\lambda(x,y) g(y)\,\mathrm d y,\quad g\in L^1(\mathbb R),
	\end{align}
	with the kernel $K_\lambda(x,y)=C_\lambda(x,y)+D_\lambda(x,y)$ of the form
	\begin{align*}
	C_\lambda(x,y)=\frac{1}{2\alpha\sqrt{\lambda}}\begin{cases}
	\alpha e^{i\sqrt\lambda (x+y)},& x\geq 0,\,y\geq 0,\\
	-e^{\sqrt{\lambda}(ix+y)},&x\geq 0,\,y<0,\\
	e^{\sqrt{\lambda}(x+iy)},&x<0,\,y\geq 0,\\
	-\overline{\alpha}e^{\sqrt{\lambda}(x+y)},&x<0,\,y<0,
	\end{cases}
	\end{align*}
	and
	\begin{align*}
	D_\lambda(x,y)=\frac{1}{2\alpha\sqrt{\lambda}}\begin{cases}
	\overline{\alpha} e^{i\sqrt{\lambda}|x-y|},& x\geq 0,\,y\geq 0,\\
	0,&x\geq 0,\,y<0,\\
	0,&x<0,\,y\geq 0,\\
	-\alpha e^{-\sqrt{\lambda}|x-y|},&x<0,\,y<0,
	\end{cases}
	\end{align*}
	where $\alpha:=\frac{1-i}{2}$. Hence,
	\begin{align*}
		|K_\lambda(x,y)|=|C_\lambda(x,y)+D_\lambda(x,y)|\leq \frac{1}{\sqrt{|\lambda|}}
	\end{align*}
	and the integral in \eqref{defT} converges for every $g\in L^1(\mathbb R)$.
	We have
	\begin{align*}
	\sup_{y\geq 0} \int_{\mathbb R} |C_\lambda(x,y)|\,\mathrm dx =
\frac{1}{2\sqrt{|\lambda|}}\left(\frac{1}{\operatorname{Im}\sqrt{\lambda}}
+ \frac{\sqrt{2}}{\operatorname{Re}\sqrt{\lambda}}\right)
	\end{align*}
and
\begin{align*}
	\sup_{y< 0} \int_{\mathbb R} |C_\lambda(x,y)|\,\mathrm dx =
\frac{1}{2\sqrt{|\lambda|}}\left(\frac{\sqrt{2}}{\operatorname{Im}\sqrt{\lambda}}
+ \frac{1}{\operatorname{Re}\sqrt{\lambda}}\right).
	\end{align*}
	For $y\geq 0$ we estimate
	\begin{align*}
	\int_{0}^\infty |D_\lambda(x,y)|\,\mathrm dx = \frac{1}{2\sqrt{|\lambda|}}\int_{0}^\infty e^{-\operatorname{Im}\sqrt{\lambda}|x-y|}\,
	\mathrm dx=\frac{2-e^{-\operatorname{Im}\sqrt{\lambda}y}}{2\sqrt{|\lambda|}\operatorname{Im}\sqrt{\lambda}}\leq \frac{1}{\sqrt{|\lambda|}\operatorname{Im}\sqrt{\lambda}},
	\end{align*}
	and analogously for $y<0$
	\begin{align*}
	\int_{-\infty}^0 |D_\lambda(x,y)|\,\mathrm dx = \frac{1}{2\sqrt{|\lambda|}}\int_{-\infty}^0 e^{-\operatorname{Re}\sqrt{\lambda}|x-y|}\,\mathrm dx=\frac{2-e^{\operatorname{Re}\sqrt{\lambda}y}}{2\sqrt{|\lambda|}\operatorname{Re}\sqrt{\lambda}}\leq \frac{1}{\sqrt{|\lambda|}\operatorname{Re}\sqrt{\lambda}}.
	\end{align*}
	Hence,
	\begin{align*}
	c:=\sup_{y\in\mathbb R}\int_{\mathbb R} |K_\lambda(x,y)|\,\mathrm dx <\infty
	\end{align*}
	and Fubini's theorem yields
	\begin{align*}
	\|T_\lambda g\|_{L^1} \leq \int_{\mathbb R}|g(y)|\int_{\mathbb R} |K_\lambda(x,y)|\,\mathrm dx\,\mathrm dy \leq c \|g\|_{L^1}.
	\end{align*}
	Therefore $T_\lambda$ in \eqref{defT} is an everywhere defined bounded operator in $L^1(\mathbb R)$.
	
	We claim that $T_\lambda$ is the inverse of $B_0-\lambda$. In fact,
	consider the functions $u,v$ given by
	\begin{align*}
	u(x) =\begin{cases}
	e^{i\sqrt{\lambda}x},& x\geq 0,\\ \overline{\alpha} e^{\sqrt{\lambda}x} + \alpha e^{-\sqrt{\lambda}x},& x<0,
	\end{cases}
	\quad\text{and}\quad
	v(x) = \begin{cases}
	\alpha e^{i\sqrt{\lambda}x} + \overline{\alpha} e^{-i\sqrt{\lambda}x},& x\geq 0,\\
	e^{\sqrt{\lambda}x},& x< 0,\\
	\end{cases}
	\end{align*}
which  solve the differential equation $\operatorname{sgn}(\cdot)(-f'')=\lambda f$, that is, 
$u$ and $v$, and their derivatives, belong to $AC(\mathbb R)$ and satisfy the differential
equation almost everywhere.
	Since the Wronskian equals $2\alpha \sqrt{\lambda}$, these solutions are linearly independent. Note that $u,v\notin L^1(\mathbb R)$ and one concludes that
	$B_0-\lambda$ is injective.
	A simple calculation shows the identity
	\begin{align*}
	K_\lambda(x,y)=C_\lambda(x,y) + D_\lambda(x,y)=\frac{1}{2\alpha\sqrt{\lambda}}
		\begin{cases} u(x) v(y) \operatorname{sgn}(y),  & y<x, \\ v(x) u(y)\operatorname{sgn}(y), & x<y, \end{cases}
	\end{align*}
	and hence we have
	\begin{align*}
	(T_\lambda g)(x) = \frac{1}{2\alpha\sqrt{\lambda}} \left(u(x)\int_{-\infty}^{x} \!v(y)\operatorname{sgn}(y)g(y)\,\mathrm dy
	+ v(x)\int_{x}^{\infty}\!u(y)\operatorname{sgn}(y)g(y)\,\mathrm dy\right).
	\end{align*}
	One verifies $T_\lambda g, (T_\lambda g)'\in AC(\mathbb R)$ and $T_\lambda g$ is a solution of $\operatorname{sgn}(\cdot)(-f'')-\lambda f =g$. This implies
	$(T_\lambda g)''\in L^1(\mathbb R)$ and hence  $T_\lambda g\in\operatorname{dom}B_0$ satisfies
	\begin{align*}
	(B_0-\lambda)T_\lambda g = g\quad\text{for all }g\in L^1(\mathbb R).
	\end{align*}
	Therefore, $B_0-\lambda$ is surjective and we have $T_\lambda=(B_0-\lambda)^{-1}$. It follows that $B_0$ is a closed operator
	in $L^1(\mathbb R)$ and that $\lambda$ belongs to the resolvent set of $B_0$.
\end{proof}

\begin{proof}[Proof of Theorem \ref{ttt1}]
	Since the non-real point spectrum of $A$ is symmetric with respect to the real line
	(see Theorem \ref{t1}) it suffices to consider eigenvalues in the upper half plane.
	Let $\lambda \in \mathbb C^+$ be an
	eigenvalue of $A$ with a corresponding eigenfunction $f\in\operatorname{dom} A$.
	Since $q\in L^1(\mathbb R)$ and $-\frac{\mathrm d^2}{\mathrm d x^2} + q$ is in the limit point
case at $\pm\infty$ (see, e.g. \cite[Lemma~9.37]{Teschl14}) the function $f$ is unique up to a constant multiple.
	As $-f''+qf=\lambda f$ on $\mathbb R^+$ and $f''-qf=\lambda f$ on $\mathbb R^-$
	with $q$ integrable one has the well-known asymptotical behaviour
	\begin{equation}\label{asy1}
	\begin{split}
	f(x) &= \alpha_+ \big(1+  o(1)\big) e^{i\sqrt{\lambda} x},\quad x\rightarrow +\infty,\\
	f'(x) &= \alpha_+ i\sqrt{\lambda} \big(1+  o(1)\big) e^{i\sqrt{\lambda} x}, \quad x\rightarrow +\infty,
	\end{split}
	\end{equation}
	and
	\begin{equation}\label{asy2}
	\begin{split}
	f(x) &= \alpha_- \big(1+ o(1)\big) e^{\sqrt{\lambda} x},\quad x\rightarrow -\infty,\\
	f'(x) &= \alpha_-\sqrt{\lambda} \big(1+ o(1)\big) e^{\sqrt{\lambda} x},\quad x\rightarrow -\infty,
	\end{split}
	\end{equation}
	for some $\alpha_+,\alpha_-\in\mathbb C$; see, e.g. \cite[§ 24.2, Example a]{Naimark68} or \cite[Lemma~9.37]{Teschl14}.
	These asymptotics yield $f,qf\in L^1(\mathbb R)$ and $-f'' = \lambda \operatorname{sgn}(\cdot)f-qf\in L^1(\mathbb R)$,
	and therefore $f\in \operatorname{dom}B_0$.
	Thus, $f$ satisfies
	\begin{align*}
		0=(A-\lambda)f=\operatorname{sgn}(\cdot)(-f'') - \lambda f + \operatorname{sgn}(\cdot)qf=(B_0-\lambda)f +\operatorname{sgn}(\cdot)qf
	\end{align*}
	and since $\lambda$ is in the resolvent set of $B_0$ we obtain
	\begin{align*}
		-qf = q(B_0-\lambda)^{-1}\operatorname{sgn}(\cdot)qf.
	\end{align*}
	Note that $\|qf\|_{L^1}\not=0$ as otherwise $\lambda$ would be an eigenvalue of $B_0$.
	With the help of Lemma~\ref{blem} we then conclude
	\begin{align*}
		0<\|qf\|_{L^1} \leq\int_{\mathbb R} |q(x)| \int_{\mathbb R} |K_\lambda(x,y)|
		|q(y)f(y)|\,\mathrm d y\,\mathrm d x \leq \frac{1}{\sqrt{|\lambda|}}\|qf\|_{L^1} \|q\|_{L^1}
	\end{align*}
	and this yields the desired bound \eqref{davisbound1}.
\end{proof}

\section{Proof of Theorem \ref{ttt2}}\label{sec3}

In this section we prove the bounds in \eqref{imagpart1} for the non-real eigenvalues of $A$ in Theorem~\ref{ttt2},
which depend only on the negative part $q_-(x)=\max\{0,-q(x)\}$, $x\in \mathbb R$, of the potential.
The following lemma will be useful.

\begin{lemma}
	\label{chinaLemma}
	Let $\lambda\in\mathbb C^+ $ be an eigenvalue of $A$ and let $f$ be a corresponding eigenfunction. Define
	\begin{align*}
	U(x):=\int_{x}^\infty \operatorname{sgn}(t)|f(t)|^2\,\mathrm dt\quad\text{and}\quad V(x):= \int_x^\infty |f'(t)|^2 + q(t) |f(t)|^2\,\mathrm dt.
	\end{align*}
	for $x\in\mathbb R$.
	Then the following assertions hold:
	\begin{itemize}
        \setlength\itemsep{1ex}
		\item[{\rm (a)}] $\lambda U(x) = f'(x)\overline{f(x)} + V(x)$;
		\item[{\rm (b)}] $\lim_{x\rightarrow -\infty}U(x)=0$ and $\lim_{x\rightarrow -\infty}V(x)=0$;
		\item[{\rm (c)}] $\|f'\|_{L^2}\leq 2\|q_-\|_{L^1} \|f\|_{L^2}$;
		\item[{\rm (d)}] $\|f\|_{\infty}\leq 2\sqrt{\|q_-\|_{L^1}} \|f\|_{L^2}$;
        \item[{\rm (e)}] $\|qf^2\|_{L^1}\leq 8\|q_-\|_{L^1}^2 \|f\|_{L^2}^2$.
	\end{itemize}
\end{lemma}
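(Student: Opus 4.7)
The plan is to start from the eigenvalue equation $-f''+qf=\lambda\operatorname{sgn}(\cdot)f$, multiply by $\overline{f}$, and integrate from $x$ to $+\infty$. The asymptotics \eqref{asy1} give exponential decay of $f$ and $f'$ at $+\infty$ (because $\operatorname{Im}\sqrt{\lambda}>0$), so the boundary term from integration by parts vanishes at $+\infty$ and one obtains
\begin{equation*}
\lambda U(x) = f'(x)\overline{f(x)} + \int_x^\infty |f'(t)|^2\,\mathrm dt + \int_x^\infty q(t)|f(t)|^2\,\mathrm dt = f'(x)\overline{f(x)} + V(x),
\end{equation*}
which is (a). For (b), the asymptotics \eqref{asy2} show $f(x),f'(x)\to 0$ exponentially as $x\to -\infty$, so $f'(x)\overline{f(x)}\to 0$ at $-\infty$. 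Both $U(x)$ and $V(x)$ are real-valued because $q$ is real, so taking $x\to -\infty$ in (a) yields $\lambda\lim_{x\to-\infty}U(x)=\lim_{x\to-\infty}V(x)$ with both limits real. Since $\lambda\notin\mathbb R$, both limits must vanish.

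For (c), I would use the identity $V(-\infty)=0$ from (b), which rewrites as $\|f'\|_{L^2}^2=-\int_{\mathbb R}q|f|^2\,\mathrm dt\leq\int_{\mathbb R}q_-|f|^2\,\mathrm dt\leq\|q_-\|_{L^1}\|f\|_\infty^2$. A standard Sobolev-type pointwise estimate $\|f\|_\infty^2\leq 2\|f'\|_{L^2}\|f\|_{L^2}$ (which follows from $|f(x)|^2=-\int_x^\infty 2\operatorname{Re}(f'\overline{f})\,\mathrm dt$ combined with Cauchy--Schwarz) then gives $\|f'\|_{L^2}^2\leq 2\|q_-\|_{L^1}\|f'\|_{L^2}\|f\|_{L^2}$, and dividing by $\|f'\|_{L^2}$ yields (c). Part (d) is immediate by substituting (c) back into the Sobolev estimate. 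For (e), I would split $\|qf^2\|_{L^1}=\int q_+|f|^2+\int q_-|f|^2$; the identity $\int q|f|^2=-\|f'\|_{L^2}^2\leq 0$ forces $\int q_+|f|^2\leq \int q_-|f|^2$, so $\|qf^2\|_{L^1}\leq 2\|q_-\|_{L^1}\|f\|_\infty^2$, and (d) finishes the bound with the constant $8$.

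The only subtle step is (b): one cannot evaluate the limits of $U$ and $V$ by direct computation, and must instead exploit the non-reality of $\lambda$ together with the real-valuedness of $U$ and $V$ in the identity of (a). The remaining parts cascade smoothly as (c) $\Rightarrow$ (d) $\Rightarrow$ (e), all driven by the single interpolation bound $\|f\|_\infty^2\leq 2\|f'\|_{L^2}\|f\|_{L^2}$ together with the identity $\|f'\|_{L^2}^2=-\int_{\mathbb R} q|f|^2\,\mathrm dt$.
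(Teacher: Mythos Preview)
Your proof is correct and follows essentially the same approach as the paper: multiply the eigenvalue equation by $\overline f$, integrate by parts over $[x,\infty)$, use the asymptotics for the boundary terms, and deduce (b) by taking imaginary parts; then combine the identity $\|f'\|_{L^2}^2=-\int q|f|^2$ with the interpolation bound $\|f\|_\infty^2\le 2\|f'\|_{L^2}\|f\|_{L^2}$ to obtain (c)--(e). The only cosmetic difference is that the paper derives (d) first and then substitutes it back to get (c), whereas you divide through by $\|f'\|_{L^2}$ to get (c) first and then (d); both orderings are equivalent.
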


\begin{proof}
Note that $f$ satisfies the asymptotics \eqref{asy1}--\eqref{asy2} and hence
$f$ and $f'$ vanish at $\pm\infty$ and $f'\in L^2(\mathbb R)$. In particular, $V(x)$ is well defined.
We multiply the identity $\lambda f(t)= \operatorname{sgn}(t)(-f''(t)+q(t) f(t))$ by $\operatorname{sgn}(t)\overline{f(t)}$ and integration by parts yields  
	\begin{align*}
	\lambda U(x) = \int_x^\infty -f''(t)\overline{f(t)} + q(t)|f(t)|^2 \,\mathrm dt =  f'(x)\overline{f(x)} + V(x)
	\end{align*}
	for all $x\in\mathbb R$. This shows (a). Moreover, we have 
	\begin{align*}
	\lambda \int_{\mathbb R} \operatorname{sgn}(t)|f(t)|^2\,\mathrm dt = 
\lim_{x\to -\infty} \lambda U(x) = 
\lim_{x\to -\infty}V(x) = 
\int_{\mathbb R} |f'(t)|^2+ q(t) |f(t)|^2\,\mathrm dt.
	\end{align*}
Taking the imaginary part shows $\lim_{x\to -\infty}U(x)=0$ and, hence,
$\lim_{x\to -\infty}V(x)=0$. This proves (b).

	As $f$ is continuous and vanishes at $\pm\infty$ we have $\|f\|_{\infty}<\infty$. Let $q_+(x) := \max\{0,q(x)\}$, $x\in\mathbb R$. Making use of $\lim_{x\rightarrow -\infty}V(x)=0$ and $q=q_+-q_-$ we find
	\begin{align}
	\label{emb1}
    \begin{split}
	0\leq\|f'\|_{L^2}^2& = -\int_{\mathbb R}q(t)|f(t)|^2\,\mathrm dt = -\int_{\mathbb R}\big(q_+(t)-q_-(t)\big)|f(t)|^2\,\mathrm dt\\
     &\leq \int_{\mathbb R} q_-(t)|f(t)|^2\,\mathrm dt
	\leq \|q_-\|_{L^1}\|f\|_\infty^2.
    \end{split}
	\end{align}
    This implies $\|q_+ f^2\|_{L^1}\leq \|q_-f^2\|_{L^1}\leq \|q_-\|_{L^1}\|f\|_\infty^2$ and, thus,
    \begin{align}
    \label{estq}
    \|qf^2\|_{L^1}=\int_{\mathbb R} |q(t)| |f(t)|^2\,\mathrm dt=\int_{\mathbb R} \big(q_+(t)+q_-(t)\big) |f(t)|^2\,\mathrm dt\leq 2\|q_-\|_{L^1}\|f\|_\infty^2.
    \end{align}
	 In order to verify (d) let $x,y\in\mathbb R$ with $x>y$. Then
	\begin{align*}
	|f(x)|^2-|f(y)|^2 = \int_{y}^{x} \bigl(|f|^2\bigr)'(t)\,\mathrm dt \leq 2\int_{y}^x |f(t)f'(t)|\,\mathrm dt\leq 2\|f\|_{L^2} \|f'\|_{L^2}	
	\end{align*}
	together with $f(y)\rightarrow 0$, $y\rightarrow -\infty$, leads to  $\|f\|^2_\infty\leq 2 \|f\|_{L^2}\|f'\|_{L^2}$. Since $f$ is an eigenfunction $\|f\|_\infty$ does not vanish and we have with \eqref{emb1}
    \begin{align*}
        \|f\|_\infty \leq \frac{2 \|f\|_{L^2}\|f'\|_{L^2}}{\|f\|_\infty} \leq 2\sqrt{\|q_-\|_{L^1}} \|f\|_{L^2},
    \end{align*}
    which shows (d). Moreover, the estimate in (d) applied to \eqref{emb1} and \eqref{estq} yield (c) and (e).
\end{proof}

\begin{proof}[Proof of Theorem~\ref{ttt2}]
	Let $\lambda \in \mathbb C^+$ be a eigenvalue of $A$ and
	let $f\in\operatorname{dom}A$ be a corresponding eigenfunction. We can assume $\|q_-\|_{L^1}>0$ as otherwise $f=0$ by
	Lemma~\ref{chinaLemma} (d). Let $U$ and $V$ be as in Lemma~\ref{chinaLemma},
	let $\delta:= (24 \|q_-\|_{L^1})^{-1}$ and define the function $g$ on  $\mathbb R$ by
	\begin{align*}
	g(x)=\begin{cases}
	\operatorname{sgn}(x),& |x|>\delta,\\
	\frac{x}{\delta}, & |x|\leq \delta.
	\end{cases}
	\end{align*}
	From Lemma~\ref{chinaLemma}~(a) we have
	\begin{align}
	\label{moskau}
	\lambda\int_{\mathbb R} g'(x) U(x)\,\mathrm dx = \int_{\mathbb R} g'(x)\bigl(f'(x)\overline{f(x)} + V(x)\bigr)\,\mathrm dx.
	\end{align}
	Since $g$ is bounded and $U(x)$ vanishes for $x\rightarrow \pm\infty$, integration by parts leads to the estimate
	\begin{equation}\label{zweidrittel}\begin{split}
	\int_{\mathbb R} g'(x) U(x) \,\mathrm dx =& \int_{\mathbb R}g(x) \operatorname{sgn}(x)|f(x)|^2\,\mathrm dx
	\geq \int_{\mathbb R\setminus [-\delta,\delta]} |f(x)|^2\,\mathrm dx\\
	= & \|f\|_{L^2}^2 - \int_{-\delta}^\delta |f(x)|^2\,\mathrm dx \geq  \|f\|_{L^2}^2 -2\delta\|f\|_{\infty}^2\\
	\geq & \|f\|_{L^2}^2 - 8 \delta \|q_-\|_{L^1} \|f\|_{L^2}^2 = \frac{2}{3}\|f\|_{L^2}^2;
	\end{split}\end{equation}
	here we have used  Lemma~\ref{chinaLemma}~(d) in the last line of \eqref{zweidrittel}.
	Further we see with Lemma~\ref{chinaLemma} (c)--(d) 
	\begin{equation}\label{colfo}\begin{split}
	\left|\int_{\mathbb R} g'(x)f'(x)\overline{f(x)}\,\mathrm dx \right|&\leq \|f\|_\infty \|f'\|_{L^2} \|g'\|_{L^2}\leq 4 \|q_-\|_{L^1}^{\frac{3}{2}} \|f\|_{L^2}^2 \sqrt{\frac{2}{\delta}}\\
	&\leq 16\cdot \sqrt{3}\|q_-\|_{L^1}^{2} \|f\|_{L^2}^2.
	\end{split}\end{equation}
	Since $\Vert g\Vert_\infty=1$ and $V(x)$ vanishes for $x\rightarrow\pm\infty$ integration by parts together with Lemma~\ref{chinaLemma}~(c) and (e) yields
	\begin{equation}\label{colfo2}\begin{split}
	\left|\int_{\mathbb R} g'(x) V(x)\,\mathrm dx\right| &={ } \left|\int_{\mathbb R} g(x) \left(|f'(x)|^2+q(x)|f(x)|^2\right)\,\mathrm dx\right|\\
	&\leq{ } \|g\|_\infty \left(\|f'\|_{L^2}^2 + \|qf^2\|_{L^1}\right) \leq 12 \|q_-\|_{L^1}^2\|f\|_{L^2}^2.
	\end{split}\end{equation}	
	Comparing the imaginary parts in \eqref{moskau} we have with \eqref{zweidrittel} and \eqref{colfo}
	\begin{align*}
	\frac{2}{3}|\operatorname{Im}\lambda| \|f\|_{L^2}^2\leq&{ }|\operatorname{Im}\lambda|\left|\int_{\mathbb R} g'(x) U(x)\,\mathrm dx\right|
	\leq\left|\int_{\mathbb R} g'(x)f'(x)\overline{f(x)}\,\mathrm dx\right|\\
	\leq&{ }16\cdot \sqrt{3}\|q_-\|_{L^1}^{2} \|f\|_{L^2}^2.
	\end{align*}
	In the same way we obtain from \eqref{zweidrittel}, \eqref{moskau} and \eqref{colfo}--\eqref{colfo2} that 
	\begin{align*}
	\frac{2}{3}|\lambda| \|f\|_{L^2}^2\leq&{ }\left|\lambda \int_{\mathbb R} g'(x) U(x)\,\mathrm dx\right|
	=\left|\int_{\mathbb R} g'(x)\bigl(f'(x)\overline{f(x)}+ V(x)\bigr)\,\mathrm dx\right|\\
	\leq&{ }\left(16\cdot \sqrt{3}+12\right)\|q_-\|_{L^1}^{2} \|f\|_{L^2}^2.
	\end{align*}
	This shows the bounds in \eqref{imagpart1}.
\end{proof}


\begin{thebibliography}{99}
	\bibitem{AbramovAslanyanDavies} A.\,A.~Abramov, A.~Aslanyan, E.\,B.~Davies, \textit{Bounds on complex eigenvalues and resonances},
	J.~Phys.~A: Math.\ Gen.\ \textbf{34}, 57--72 (2001).
	
	\bibitem{BehrndtChenPhlippQi} J.~Behrndt, S.~Chen, F.~Philipp, J.~Qi, \textit{Estimates on the non-real eigenvalues of regular indefinite Sturm-Liouville problems}, Proc.\ Roy.\ Soc.\ Edinburgh Sect.\ A \textbf{144}, 1113--1126 (2014).
	
	\bibitem{BehrndtKatatbehTrunk09} J.~Behrndt, Q.~Katatbeh, C.~Trunk, \textit{Non-real eigenvalues of singular indefinite Sturm-Liouville operators},
Proc.\ Amer.\ Math.\ Soc.\ \textbf{137}, 3797--3806 (2009).
	
	\bibitem{BehrndtPhilipp10} J.~Behrndt, F.~Philipp, \textit{Spectral analysis of ordinary differential operators with indefinite weigths}, J.\ Differential Equations \textbf{248}, 2015--2037 (2010).
	
	\bibitem{BehrndtPhilippTrunk13} J.~Behrndt, F.~Philipp, C.~Trunk, \textit{Bounds on the non-real spectrum of differential operators with indefinite weights}, Math.\ Ann.\ \textbf{357}, 185--213 (2013).
	
	\bibitem{BehrndtSchmitzTrunkPAMM16} J.~Behrndt, P.~Schmitz, C.~Trunk, \textit{Bounds on the non-real spectrum of a singular indefinite Sturm-Liouville operator
	on $\mathbb R$}, Proc.\ Appl.\ Math.\ Mech.\ \textbf{16}, 881--882 (2016).
	
	\bibitem{BehrndtSchmitzTrunkPAMM17} J.~Behrndt, P.~Schmitz, C.~Trunk, \textit{Estimates for the non-real spectrum of a singular indefinite Sturm-Liouville operator
	on $\mathbb R$}, to appear in Proc.\ Appl.\ Math.\ Mech.\ \textbf{17}.
	
	
	\bibitem{BE02} B.\,M.~Brown, M.\,S.\,P.~Eastham, \textit{Analytic continuation and resonance-free regions for Sturm-Liouville potentials with power decay},
	J.\ Comput.\ Appl.\ Math.\ \textbf{148}, 49--63 (2002).
	
	
	\bibitem{ChenQi14} S.~Chen, J.~Qi, \textit{A priori bounds and existence of non-real eigenvalues of indefinite Sturm-Liouville problems}, J.\ Spectr.\ Theory \textbf{4}, 53--63 (2014).
	
	\bibitem{ChenQiXie16} S.~Chen, J.~Qi, B.~Xie, \textit{The upper and lower bounds on the non-real eigenvalus of indefinite Sturm-Liouville problems}, Proc.~Amer.~Math.~Soc.~\textbf{144}, 547--559 (2016).
	
	
	\bibitem{CL89} B.~\'{C}urgus, H.~Langer, \textit{A Krein space approach to symmetric ordinary differential operators
with an indefinite weight function}, J.\ Differential Equations \textbf{79}, 31--61 (1989).
	
	\bibitem{Davies07} E.\,B.~Davies, \textit{Linear Operators and their Spectra},  Cambridge Studies in Advanced Mathematics, Cambridge University Press, Cambridge, 2007.
	
	\bibitem{DN02} E.\,B.~Davies, J. Nath, \textit{Schr\"{o}dinger operators with slowly decaying potentials}, J.\ Comput.\ Appl.\ Math.\ \textbf{148}, 1--18 (2002).
	
	\bibitem{GuoSunXie17} X.~Guo, H.~Sun, B.~Xie, \textit{Non-real eigenvalues of symmetric Sturm–Liouville problems with indefinite weight functions}, Electron.\ J.\ Qual.\ Theory Differ.\ Equ.\ \textbf{2017}, 1--14 (2017).
	
	\bibitem{H14} O.\ Haupt, \textit{\"{U}ber eine Methode zum Beweise von Oszillationstheoremen}, Math.\ Ann.\ \textbf{76}, 67--104 (1914).

\bibitem{KT09}
   I.\ Karabash, C.\ Trunk, \textit{Spectral properties of singular
    Sturm-Liouville operators}, Proc.\ Roy.\ Soc.\ Edinburgh  Sect.\ A
    \textbf{139}, 483--503 (2009).
	

	\bibitem{KikonkoMingarelli16} M.~Kikonko, A.\,B.~Mingarelli, \textit{Bounds on real and imaginary parts of non-real eigenvalues of
	a non-definite Sturm-Liouville problem}, J.\ Differential Equations \textbf{261}, 6221--6232 (2016).
	
	\bibitem{M86} A.\,B.~Mingarelli, \textit{A survey of the regular weighted Sturm–Liouville problem–the non-definite case}, in:
	Proceedings of the Workshop on Applications of Differential Equations, 1986, pp. 109--137.

\bibitem{LS16}
M.\ Levitin, M.\ Seri, \textit{Accumulation of complex eigenvalues of an indefinite
Sturm-Liouville operator with a shifted Coulomb potential},
Operators and Matrices \textbf{10}, 223--245 (2016).
	
	\bibitem{Naimark68} M.\,A.~Naimark, \textit{Linear Differential Operators. Part II: Linear Differential Operators in Hilbert Space}, Frederick Ungar Publishing Co., New York, 1968.
	
	
	\bibitem{QiXie13} J.~Qi, B.~Xie, \textit{Non-real eigenvalues of indefinite Sturm-Liouville problems}, J.~Differential Equations \textbf{255}, 2291--2301 (2013).
	
	
	\bibitem{R18} R.~Richardson, \textit{Contributions to the study of oscillation properties of the solutions of linear differential equations of the second order},
	Amer.\ J.\ Math.\ \textbf{40}, 283--316 (1918).
	
	
	\bibitem{Teschl14} G.~Teschl, \textit{Mathematical Methods in Quantum Mechanics; With Applications to Schrödinger Operators}, Amer.\ Math.\ Soc., Providence, Rhode Island, 2009.
	
	\bibitem{Weidmann87} J.~Weidmann, \textit{Spectral Theory of Ordinary Differential Operators}, Lecture Notes in Math.\ 1258, Springer,  1987.
	
	\bibitem{Weidmann03} J.~Weidmann, \textit{Lineare Operatoren in Hilberträumen Teil II},	Teubner, 2003.

	\bibitem{Z05} A.~Zettl, \textit{Sturm-Liouville Theory}, Mathematical Surveys and Monographs \textbf{121}, AMS, Providence, RI, 2005.
 	\end{thebibliography}
\end{document}